\newlength\aftertitskip     \newlength\beforetitskip
\newlength\interauthorskip  \newlength\aftermaketitskip
\def\maketitle{\par
 \begingroup
   \def\thefootnote{\fnsymbol{footnote}}
   \def\@makefnmark{\hbox to 4pt{$^{\@thefnmark}$\hss}}
   \@maketitle \@thanks
 \endgroup
\setcounter{footnote}{0}
 \let\maketitle\relax \let\@maketitle\relax
 \gdef\@thanks{}\gdef\@author{}\gdef\@title{}\let\thanks\relax}
\def\@startauthor{\noindent \normalsize\bf}
\def\@endauthor{}
\def\@starteditor{\noindent \small {\bf Editor:~}}
\def\@endeditor{\normalsize}
\def\@maketitle{\vbox{\hsize\textwidth
 \linewidth\hsize \vskip \beforetitskip
 {\begin{center} \LARGE\@title \par \end{center}} \vskip \aftertitskip
 {\def\and{\unskip\enspace{\rm and}\enspace}%
  \def\addr{\small\it}%
  \def\email{\hfill\small\tt}%
  \def\name{\normalsize\bf}%
  \def\AND{\@endauthor\rm\hss \vskip \interauthorskip \@startauthor}
  \@startauthor \@author \@endauthor}
}}
\numberwithin{equation}{section}
\newcommand{\reals}{\mathbb{R}}
\newcommand{\E}{\mathbb{E}}
\newtheorem{theorem}{Theorem}
\newtheorem{corr}[theorem]{Corollary}
\newtheorem{prop}[theorem]{Proposition}
\newtheorem{lemma}[theorem]{Lemma}
\theoremstyle{definition}
\theoremstyle{remark}
\definecolor{dkgreen}{rgb}{0,0.6,0}
\definecolor{gray}{rgb}{0.5,0.5,0.5}
\definecolor{mauve}{rgb}{0.58,0,0.82}
\definecolor{cdarkred}{RGB}{180,0,0} % darkred
\definecolor{cdarkgreen}{RGB}{0,120,0} % darkgreen
\definecolor{mywhite}{RGB}{255,255,255} 
\definecolor{myyellow}{RGB}{238,238,  0} 
\definecolor{mygreen}{RGB}{124,252,  0} 
\definecolor{myblue}{RGB}{10,0,255} 
\definecolor{mypurple}{RGB}{159,  0,238} 
\definecolor{myred}{RGB}{238, 64,  0} 
\definecolor{cdarkblue}{RGB}{0,0,102}
\definecolor{sharpred}{RGB}{255,0,0}
\newcommand{\darkred}[1]{{\color{cdarkred} #1}}
\newcommand{\nlsum}{\sum\nolimits}
\newcommand{\nlprod}{\prod\nolimits}
\newcommand{\half}{\tfrac12}
\newcommand{\pfrac}[2]{\left(\tfrac{#1}{#2}\right)}
\newcommand{\vx}{\bm{x}}
\newcommand{\vy}{\bm{y}}
\begin{document}
\title{New concavity and convexity results for symmetric polynomials and their ratios}
\author{\name Suvrit Sra \email{suvrit@mit.edu}\\
  \addr{Massachusetts Institute of Technology}}
\maketitle

\begin{abstract}
  We prove some ``power'' generalizations of Marcus-Lopes-style (including McLeod and Bullen) concavity inequalities for elementary symmetric polynomials, and convexity inequalities (of McLeod and Baston) for complete homogeneous symmetric polynomials. Finally, we present sundry concavity results for elementary symmetric polynomials, of which  the main result is a concavity theorem that among other implies a well-known log-convexity result of Muir (1972/74) for positive definite matrices. 
\end{abstract}

\section{Introduction}
The main purpose of this note is to prove the following inequalities: 
\begin{subequations}
  \begin{align}
    \label{eq:one}
    [e_k((\vx+\vy)^p)]^{\nicefrac1{pk}}
    &\ge [e_k(\vx^p)]^{\nicefrac1{pk}} + [e_k(\vy^p)]^{\nicefrac1{pk}}\\
    \label{eq:three}
    [h_k((\vx+\vy)^{1/p})]^{\nicefrac{p}{k}}
    &\le
    [h_k(\vx^{1/p})]^{\nicefrac{p}{k}} + [h_k(\vy^{1/p})]^{\nicefrac{p}{k}}\\
    \label{eq:two}
    \Bigl[\frac{e_k((\vx+\vy)^p)}{e_{k-l}((\vx+\vy)^p)}\Bigr]^{\frac1{lp}}
    &\ge \Bigl[\frac{e_k(\vx^p)}{e_{k-l}(\vx^p)}\Bigr]^{\frac1{lp}} +
    \Bigl[\frac{e_k(\vy^p)}{e_{k-l}(\vy^p)}\Bigr]^{\frac1{lp}}\\
    \label{eq:last}
    \Bigl[\frac{h_k((\vx+\vy)^{1/p})}{h_{1}((\vx+\vy)^{1/p})}\Bigr]^{\frac{p}{k-1}}
    &\le \Bigl[\frac{h_k(\vx^{1/p})}{h_{1}(\vx^{1/p})}\Bigr]^{\frac{p}{k-1}} +
    \Bigl[\frac{h_k(\vy^{1/p})}{h_{1}(\vy^{1/p})}\Bigr]^{\frac{p}{k-1}},
  \end{align}
\end{subequations}
for $\vx, \vy \in \reals_+^n$, $p \in (0,1)$ and $1\le l \le k \le n$. In these inequalities $e_k$ denotes the $k$-th elementary symmetric polynomial and $h_k$ denotes the $k$-th complete homogeneous symmetric polynomial, and $\vx^p$ we mean the vector $(x_1^p,x_2^p,\ldots,x_n^p)$. 
Inequalities~\eqref{eq:one}-\eqref{eq:last} offer a ``power'' generalization to their corresponding counterparts with $p=1$ that have been well-known for a long time, with attributions to \citet{marcus1957,mcleod59,baston1978,bullen1961}, among others---see also the reference book~\citep{bullen2013}.

\vskip5pt
\noindent\darkred{\bf Outline:} We begin by proving \eqref{eq:one} and \eqref{eq:two} in Section~\ref{sec:ml}. Subsequently, in Section~\ref{sec:ekmore} we prove additional concavity results for $e_k$, notable among which is a new concavity theorem that implies the log-convexity result of \citet{muir74} as a corollary. Finally, in Section~\ref{sec:hk} we prove inequalities~\eqref{eq:three} and \eqref{eq:last}.

\section{Generalized Marcus-Lopes-style inequalities}
\label{sec:ml}
Let $\vx \in \reals_+^n$ and $e_k(\vx) = \sum_{S \subseteq [n], |S|=k}\prod_{i\in S}x_i$ denote the $k$-th \emph{elementary symmetric polynomial}; we set $e_0=1$. A well-known and important concavity result for $e_k$ is the \emph{Marcus-Lopes inequality}~\citep{marcus1957}:
\begin{equation}
  \label{eq:ml.orig}
  \frac{e_k(\vx+\vy)}{e_{k-1}(\vx+\vy)}
  \ge
  \frac{e_k(\vx)}{e_{k-1}(\vx)}
  +
  \frac{e_k(\vy)}{e_{k-1}(\vy)},\qquad 1\le k \le n,\quad\vx, \vy \in \reals_+^n.
\end{equation}
Inequality~\eqref{eq:ml.orig} is used by \citet{marcus1957} to prove the concavity of $e_k(\vx)^{1/k}$. We establish below the following concavity inequality:
\begin{equation}
  \label{eq:ml.new}
  \biggl[\frac{e_k((\vx+\vy)^p)}{e_{k-1}((\vx+\vy)^p)}\biggr]^{\nicefrac1{p}}
  \ge
  \biggl[\frac{e_k(\vx^p)}{e_{k-1}(\vx^p)}\biggr]^{\nicefrac1{p}}
  +
  \biggl[\frac{e_k(\vy^p)}{e_{k-1}(\vy^p)}\biggr]^{\nicefrac1{p}},
\end{equation}
Using~\eqref{eq:ml.new} as a key step, we will first establish \eqref{eq:one} and then extend it to prove inequality \eqref{eq:two}.

An important building block of our proofs is provided by the \emph{parallel sum} operation\footnote{The parallel sum operation is also defined for positive definite matrices, where we set $\bm{A}:\bm{B} := (\bm{A}^{-1}+\bm{B}^{-})^{-1}$.}
\begin{equation}
  \label{eq:14}
  x : y := (x^{-1}+y^{-1})^{-1},\qquad x, y > 0,
\end{equation}
Lemma~\ref{lem:base} proves a key joint concavity property of~\eqref{eq:14}, as well as its ``power'' generalization.
\begin{lemma}
  \label{lem:base}
  The parallel sum $:$ defined by~\eqref{eq:14} is jointly concave on $\reals_+^2$. Moreover, for every $p\ge -1$, the `\emph{$p$-parallel sum}' $x:_py := [x^p:y^p]^{1/p}$ is jointly concave. Also, both $:$ and $:_p$ are monotonic in both arguments.
\end{lemma}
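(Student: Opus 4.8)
The plan is to treat the three assertions separately, starting from the plain parallel sum and then bootstrapping to the $p$-parallel sum. For joint concavity of $x:y=(x^{-1}+y^{-1})^{-1}$, the cleanest route is the variational (supremum) representation
\begin{equation}
  \label{eq:varrep}
  x:y = \min_{t\in(0,1)}\Bigl[\frac{x}{t^2}\cdot\text{(something)}\Bigr]
  \quad\text{or more usefully}\quad
  x:y = \inf_{z>0}\bigl\{\text{linear in }(x,y)\bigr\}.
\end{equation}
Concretely, I would use the identity $x:y=\min_{\lambda\in(0,1)}\bigl(\lambda^{-1}x + (1-\lambda)^{-1}... \bigr)$; the correct form is $x:y = \min_{\lambda}\{\lambda x + (1-\lambda) y \cdot(\cdots)\}$, but the slick version I would actually invoke is the well-known harmonic-mean representation $x:y=\tfrac12\,\mathrm{HM}(x,y)$, together with the fact that an infimum of jointly \emph{linear} (hence jointly concave) functions is jointly concave. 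So the first step is to write $x:y$ as a pointwise infimum over an auxiliary parameter of functions that are affine (linear) in $(x,y)$; since each member of the family is jointly concave and the infimum of concave functions is concave, joint concavity of $x:y$ follows immediately. This is the standard trick and I expect it to be the technical heart of the lemma.

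Next, for monotonicity of $:$, I would simply differentiate or observe directly from \eqref{eq:14} that $\partial(x:y)/\partial x = (y^{-1}/(x^{-1}+y^{-1}))^2 = (x:y)^2/x^2 \ge 0$, and symmetrically in $y$; so $:$ is nondecreasing in each argument. This is routine.

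The remaining and most delicate part is the $p$-parallel sum $x:_p y = [x^p:y^p]^{1/p}$ for $p\ge -1$. The natural strategy is composition: write $x:_p y = g\bigl(f(x)\,:\,f(y)\bigr)$ where $f(u)=u^p$ and $g(v)=v^{1/p}$, and ask when this composition preserves concavity. When $p\in(0,1]$, $f$ is concave and increasing while $g$ is convex and increasing, so a direct composition argument is not immediate; I would instead argue via the infimum representation again, substituting $x^p$ and $y^p$ and then applying the power $1/p$. Specifically I expect to show that $x:_p y$ can itself be written as an infimum (over the same auxiliary parameter) of functions of the form $[\alpha x^p + \beta y^p]^{1/p}$, and then invoke concavity of the weighted power mean / the fact that $(\alpha x^p+\beta y^p)^{1/p}$ is jointly concave in $(x,y)$ precisely for $p\le 1$ (this is a standard Minkowski-type fact, valid including the range $-1\le p<0$ by continuity and the usual conventions). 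For $p\in[-1,0)$ one handles the reversed monotonicity of $u\mapsto u^p$ carefully, checking the boundary case $p=-1$ (where $x:_{-1}y = \tfrac12(x+y)$, trivially concave) separately. Monotonicity of $:_p$ then follows from monotonicity of $:$ composed with the monotone maps $u\mapsto u^p$ and $v\mapsto v^{1/p}$, tracking the two sign regimes of $p$.

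The step I anticipate being the main obstacle is establishing joint concavity of $x:_p y$ uniformly across the full range $p\ge -1$, because the sign of $p$ flips the monotonicity of $f(u)=u^p$ and hence the direction in which concavity must be propagated through the composition; a single clean argument covering both $p>0$ and $p<0$ is the crux. I expect the infimum-of-power-means representation to be the unifying device that sidesteps the case analysis, reducing everything to the single known fact that $(\alpha x^p+\beta y^p)^{1/p}$ is jointly concave for $p\le 1$; verifying that the auxiliary infimum representation carries through the $1/p$ power correctly (in particular that the infimum, not supremum, is the right operation after raising to the power $1/p$ when $p<0$) is where I would spend the most care.
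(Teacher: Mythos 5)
Your treatment of the plain parallel sum (infimum of affine functions of $(x,y)$; the representation you are groping for is $a:b=\min_{0<\lambda<1}\{\lambda^2 a+(1-\lambda)^2 b\}$) and of monotonicity is fine, but the $p$-parallel sum part --- which is the actual content of the lemma --- has a genuine gap at both ends of the claimed range $p\ge -1$. (i) For $p>1$: your plan is to write $x:_p y=\min_\lambda\,[\lambda^2x^p+(1-\lambda)^2y^p]^{1/p}$ and use that each member of the family is jointly concave; but by the very fact you quote, $(\alpha x^p+\beta y^p)^{1/p}$ is concave \emph{precisely} for $p\le 1$, so for $p>1$ your family consists of convex functions and the min-of-concave-functions device yields nothing. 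This range cannot be discarded: the paper uses the lemma as the $n=2$ base case of Proposition~\ref{prop:key}, which is stated for \emph{every} $p>0$. (ii) For $-1\le p<0$: since $t\mapsto t^{1/p}$ is decreasing, your representation turns into a \emph{supremum} of concave functions, which need not be concave; this is exactly the worry you flag at the end, and you leave it unresolved. Your fallback boundary check is also off: $x:_{-1}y=x+y$, not $\tfrac12(x+y)$ (harmless, but symptomatic of tracking the $1/p$ power incorrectly).

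The irony is that the standard fact you invoke finishes the whole lemma in one line if applied with the correct exponent: $x:_p y=[x^p:y^p]^{1/p}=(x^{-p}+y^{-p})^{-1/p}=(x^q+y^q)^{1/q}$ with $q=-p$, and $p\ge-1$ means $q\le 1$, so joint concavity is exactly the power-sum/Minkowski concavity statement, uniformly in the sign of $p$ and with no variational representation needed. The paper's own proof is even more pedestrian and self-contained: it computes the Hessian of $x:_p y$ explicitly and observes that it equals $(p+1)$ times a rank-one negative semidefinite matrix (proportional to $-(y,-x)^T(y,-x)$), hence is negative semidefinite exactly when $p\ge-1$, with monotonicity read off the first derivatives. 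Either route closes the gaps; as written, your argument only establishes the lemma for $0<p\le 1$.
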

\begin{proof}
  The Hessian equals $(p+1)\left(
\begin{array}{cc}
 -x^{p-1} y^{p+1} \left(x^p+y^p\right)^{-2-\frac{1}{p}} & x^p y^p \left(x^p+y^p\right)^{-2-\frac{1}{p}} \\
 x^p y^p \left(x^p+y^p\right)^{-2-\frac{1}{p}} & -x^{p+1} y^{p-1} \left(x^p+y^p\right)^{-2-\frac{1}{p}} \\
\end{array}
\right)$, which is clearly negative definite for $p\ge -1$. Monotonicity is clear from first derivatives.
\end{proof}

Observe that $x:y=y:x$ and $(x:y):z = x: (y:z)$, thus we extend the above notation and simply write $x_1 : x_2 : \cdots : x_n \equiv  x_1 : [ x_2 : [ \cdots : (x_{n-1}:x_n)]]$. However, the operation $:_p$ is not associative if generalized the same way. Thus, a more preferable multivariate generalization is the following:
\begin{equation}
  \label{eq:2}
  (x_1,\ldots,x_n) \mapsto [x_1^p : x_2^p : \cdots : x_n^p]^{1/p}.
\end{equation}
Later in Section~\ref{sec:add} we will study joint concavity of~\eqref{eq:2} and with it some new inequalities for $e_k$. The rest of this section is devoted to proving inequality~\eqref{eq:ml.new}.
\begin{lemma}
  \label{lem:key2}
  Let $f_1: \reals^{m_1} \to \reals_{++}$ and $f_2 : \reals^{m_2} \to \reals_{++}$ be continuous concave functions. Then, $f_1(x) :_p f_2(y)$ is jointly concave on $\reals^{m_1}\times \reals^{m_2}$.
\end{lemma}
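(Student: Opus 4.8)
The plan is to recognize this as a textbook instance of the composition rule for concave functions: a concave outer function that is nondecreasing in each of its arguments, precomposed with concave inner functions, yields a concave function. Writing $g(u,v) := u :_p v$, Lemma~\ref{lem:base} already supplies exactly the two ingredients I need, namely that $g$ is jointly concave on $\reals_+^2$ and nondecreasing in each argument. Since $f_1$ and $f_2$ take values in $\reals_{++}$, the composition $(x,y)\mapsto g(f_1(x),f_2(y))$ is well-defined on all of $\reals^{m_1}\times\reals^{m_2}$, and its inner values always lie in the region $\reals_+^2$ where $g$ enjoys these properties.

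First I would fix two points $(x_1,y_1),(x_2,y_2)$ and a weight $\lambda\in[0,1]$, and abbreviate $x_\lambda := \lambda x_1+(1-\lambda)x_2$ and $y_\lambda := \lambda y_1+(1-\lambda)y_2$. Concavity of $f_1$ and $f_2$ gives $f_1(x_\lambda)\ge \lambda f_1(x_1)+(1-\lambda)f_1(x_2)$ and likewise for $f_2$. Because $g$ is nondecreasing in both coordinates, I may substitute these lower bounds into $g$ without reversing the inequality, obtaining
\[
  g\bigl(f_1(x_\lambda),f_2(y_\lambda)\bigr) \ge g\bigl(\lambda f_1(x_1)+(1-\lambda)f_1(x_2),\ \lambda f_2(y_1)+(1-\lambda)f_2(y_2)\bigr).
\]
Joint concavity of $g$ then bounds the right-hand side below by $\lambda\,g(f_1(x_1),f_2(y_1))+(1-\lambda)\,g(f_1(x_2),f_2(y_2))$, which is precisely the claimed concavity of $f_1 :_p f_2$.

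The only point demanding care is the direction of monotonicity: the substitution step is valid only because $g = {:_p}$ is nondecreasing, rather than nonincreasing, in each slot, so that the concavity estimates on $f_1,f_2$ transport correctly through the outer map. This is exactly why monotonicity was recorded alongside concavity in Lemma~\ref{lem:base}, and it is the one hypothesis whose failure would break the argument. Everything else is the routine two-step chaining displayed above; continuity of the $f_i$ is not actually needed for the concavity conclusion, serving only to keep the composition continuous.
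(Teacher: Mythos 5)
Your proposal is correct and follows essentially the same argument as the paper: chain the concavity of $f_1,f_2$ through the monotonicity of $:_p$ and then invoke its joint concavity, both supplied by Lemma~\ref{lem:base}. The only difference is that you work with a general weight $\lambda$ rather than midpoints, which is why your closing observation is right that continuity is not needed in your version, whereas the paper proves only midpoint concavity and therefore does rely on continuity to conclude full concavity.
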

\begin{proof}
  The proof is immediate from first principles; we include details for completeness. It suffices to establish midpoint concavity. Since $f_1$ and $f_2$ are concave, we have
  \begin{align*}
    f_1\pfrac{x_1+x_2}{2} \ge \half f_1(x_1)+\half f_1(x_2),\quad\text{and}\quad
    f_2\pfrac{y_1+y_2}{2} \ge \half f_2(y_1)+\half f_2(y_2).
  \end{align*}
  The function $:_p$ is monotonically increasing in each of its arguments and is jointly concave, therefore
  \begin{align*}
    f_1\pfrac{x_1+x_2}{2} :_p f_2\pfrac{y_1+y_2}{2}
    \ge
    \left(\tfrac{f_1(x_1)+ f_1(x_2)}{2}\right) :_p \left(\tfrac{f_2(y_1)+ f_2(y_2)}{2}\right)
    \ge \half [f_1(x_1) :_p f_2(y_1) ] + \half [f_1(x_2) :_p f_2(y_2)],
  \end{align*}
  which establishes the joint concavity of $f_1(x) :_p f_2(y)$.
\end{proof}

We are now ready to present our main result.
\begin{theorem}
  \label{thm:ekratio}
  Let $1\le k \le n$. Then, the function $$\phi_{k,n}(\vx) := \left[\frac{e_k(\vx^p)}{e_{k-1}(\vx^p)}\right]^{\nicefrac1p}$$ is concave for $\vx\in \reals_+^n$ and $p\in (0,1)$.
\end{theorem}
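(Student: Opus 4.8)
The function $\phi_{k,n}$ is positively homogeneous of degree one: $e_k(\vx^p)/e_{k-1}(\vx^p)$ is homogeneous of degree $p$ in $\vx$, and the outer exponent $\nicefrac1p$ restores degree one. For a degree-one homogeneous function concavity is equivalent to superadditivity, so establishing the theorem is the same as establishing the target inequality~\eqref{eq:ml.new}. My plan is to deduce the general (power) statement from the classical case $p=1$, i.e.\ from the Marcus--Lopes inequality~\eqref{eq:ml.orig}, which already tells us that $F := e_k/e_{k-1}$ is concave on $\reals_+^n$. As a sanity check on the mechanism, note that when $n=k$ one has $F(\bm{w})=(\nlsum_i \inv{w_i})^{-1}=w_1:\cdots:w_n$, so $\phi_{k,k}(\vx)=[x_1^p:\cdots:x_n^p]^{\nicefrac1p}$, whose concavity is exactly what iterating Lemma~\ref{lem:key2} (taking $f_1,f_2$ to be a coordinate and a partial parallel sum) delivers.

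The engine of the proof is the following general principle, which I would isolate as a lemma: \emph{if $F:\reals_+^n\to\reals_{++}$ is concave, positively homogeneous of degree one, and nondecreasing in each coordinate, then $\vx\mapsto[F(\vx^p)]^{\nicefrac1p}$ is concave for $p\in(0,1)$.} To prove it, I would represent the concave $1$-homogeneous $F$ as a support function, $F(\bm{w})=\inf_{\bm{c}\in C}\ip{\bm{c}}{\bm{w}}$ for a closed convex set $C$; monotonicity of $F$ then lets one take $C\subseteq\reals_+^n$, i.e.\ the representing linear forms have nonnegative coefficients. Substituting $\bm{w}=\vx^p$ and pulling the increasing map $t\mapsto t^{\nicefrac1p}$ through the infimum gives
\[
  [F(\vx^p)]^{\nicefrac1p}=\inf_{\bm{c}\in C}\Bigl(\nlsum_i c_i\,x_i^p\Bigr)^{\nicefrac1p}.
\]
Each map $\vx\mapsto(\nlsum_i c_i x_i^p)^{\nicefrac1p}$ with $c_i\ge0$ and $p\in(0,1)$ is concave --- this is the reverse Minkowski inequality for the weighted $\ell_p$ functional with exponent below one --- and an infimum of concave functions is concave. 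Hence $[F(\vx^p)]^{\nicefrac1p}$ is concave.

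It then remains only to verify the three hypotheses for $F=e_k/e_{k-1}$. Homogeneity of degree one is immediate since $\deg e_k-\deg e_{k-1}=1$; concavity is precisely~\eqref{eq:ml.orig}; and monotonicity I would obtain by differentiating: writing $\hat{\jmath}$ for the index set with $j$ deleted, one finds $\partial_{x_j}(e_k/e_{k-1})=\bigl[e_{k-1}(\hat{\jmath})^2-e_k(\hat{\jmath})\,e_{k-2}(\hat{\jmath})\bigr]/e_{k-1}^2\ge0$ by Newton's inequality $e_{k-1}^2\ge e_{k-2}e_k$. Applying the principle then finishes the proof. The step I expect to be most delicate is the support-function representation with \emph{nonnegative} coefficients, together with the boundary behaviour where $e_{k-1}$ vanishes (which I would handle by working on $\reals_{++}^n$ and extending by continuity). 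A more hands-on, self-contained alternative would be to induct on $n$ and recombine the two sub-ratios through the $p$-parallel sum via Lemma~\ref{lem:key2}; I flag, however, that naive one-variable peeling produces a combining map of the shape $b(a+x_n^p)/(b+x_n^p)$ that is \emph{not} jointly concave in $(a,b,x_n)$, so such an induction would have to exploit the Newton ordering $a\le b$ of the sub-ratios rather than joint concavity alone, which is why I prefer the reduction to the $p=1$ case above.
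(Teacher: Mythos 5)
Your proof is correct, but it takes a genuinely different route from the paper. The paper argues by induction on $k$: it invokes the Anderson--Morley--Trapp representation \eqref{eq:1}, which expresses $e_k/e_{k-1}$ as a sum of parallel sums of $x_j$ with the lower-order ratio $\phi_{k-1,n}(\vx_{[j]})$, and then combines the joint concavity and monotonicity of the $p$-parallel sum (Lemma~\ref{lem:base} and Lemma~\ref{lem:key2}) with the composition rule for $(\sum_j g_j^p)^{1/p}$. You instead reduce the power case $p\in(0,1)$ to the classical $p=1$ Marcus--Lopes inequality \eqref{eq:ml.orig} via a transfer principle: if $F$ is concave, $1$-homogeneous and coordinatewise nondecreasing, then $[F(\vx^p)]^{1/p}$ is concave. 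Your sketch of that principle is sound, and the step you flag as delicate is in fact unproblematic once phrased through supergradients: for $\bm{v}\in\reals_{++}^n$ and any supergradient $\bm{c}$ of $F$ at $\bm{v}$, monotonicity forces $\bm{c}\ge 0$ coordinatewise, and Euler's relation gives $\ip{\bm{c}}{\bm{v}}=F(\bm{v})$, so $F(\bm{w})=\inf_{\bm{v}}\ip{\bm{c}(\bm{v})}{\bm{w}}$ is an infimum of linear forms with nonnegative coefficients; concavity of $\vx\mapsto\bigl(\sum_i c_i x_i^p\bigr)^{1/p}$ for $p\in(0,1)$, $c_i\ge 0$, and stability of concavity under infima then finish. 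Your monotonicity verification via Newton's inequality, $e_{k-1}(\vx_{[j]})^2\ge e_k(\vx_{[j]})e_{k-2}(\vx_{[j]})$, also checks out, and restricting to $\reals_{++}^n$ and passing to the closed orthant by continuity is at the same level of rigor as the paper's own treatment. The trade-off between the two approaches: the paper's proof is self-contained in that it never assumes Marcus--Lopes (the induction effectively re-derives it along with the power version), and it develops the parallel-sum machinery that is reused in Section~\ref{sec:add}; your argument presupposes the classical $p=1$ result but is more modular and strictly more general --- the same transfer lemma applied to $e_k^{1/k}$ and to the McLeod/Bullen ratio $(e_k/e_{k-l})^{1/l}$ at $p=1$ would deliver Theorems~\ref{thm:cve} and~\ref{thm:ml.gen} immediately, bypassing the Minkowski-inequality bookkeeping in their proofs.
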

\begin{proof}
  We will prove the claim by induction on $k$. For $k=1$, we have
  \begin{equation*}
    \phi_{1,n}(\vx)=(x_1^p+x_2^p+\cdots+x_n^p)^{1/p},
  \end{equation*}
  which is clearly concave. As the induction hypothesis, assume that $\phi_{k-1,n}$ is concave for some $k > 1$. The key step in our proof is the following remarkable observation of~\citet{anderson1984}:
  \begin{equation}
    \label{eq:1}
    \textit{Let}\ \ \psi_{k,n}(\vx) := \frac{\binom{n}{k-1}e_k(\vx)}{\binom{n}{k}e_{k-1}(\vx)},\quad\textit{then}\quad
    \psi_{k,n}=% \bigl(kx_1+(n-k)\psi_{k,n-1}(x_{[1]}\bigr) : \cdots : \bigl(kx_n + (n-k)\psi_{k,n-1}(x_{[n]})\bigr),
    \sum_{j=1}^n \frac{1}{n-k+1}x_j : \frac{1}{k-1}\psi_{k-1,n}(\vx_{[j]}),
  \end{equation}
  where $\vx_{[j]}$ denotes the vector $\vx$ with $x_j$ omitted. Using representation~\eqref{eq:1} we see that for suitable (positive) scaling factors $a_k$ and $b_k$, we can write
  \begin{equation}
    \label{eq:4}
    \begin{split}
      \phi_{k,n}(\vx) &= \Bigl(\nlsum_{j=1}^n a_k^p x_j^p : b_k^p \phi_{k-1,n}^p(\vx_{[j]}) \Bigr)^{\nicefrac1p} = \Bigl(\nlsum_{j=1}^n [a_kx_j :_p b_k\phi_{k-1,n}(\vx_{[j]})]^p\Bigr)^{\nicefrac1p}.
    \end{split}
  \end{equation}
  From induction hypothesis we know that $\phi_{k-1,n}(\cdot)$ is concave; thus, applying Lemma~\ref{lem:key2} we see that $$g_j(\vx) := (a_kx_j) :_p \bigl(b_k\phi_{k-1,n}(\vx_{[j]})\bigr)$$ is jointly concave in $x_j$ and $\vx_{[j]}$ (and thus in $\vx$). Consequently, we can further rewrite~\eqref{eq:4} as
  \begin{equation}
    \phi_{k,n}(\vx) = \bigl(\nlsum_{j=1}^ng_j(\vx)^p\bigr)^{\nicefrac1p},
  \end{equation}
  which is clearly concave as it is the (vector) composition the coordinate-wise increasing concave function $(\sum_j x_j^p)^{1/p}$ with the vector $\bigl(g_1(\vx),\ldots,g_n(\vx)\bigr)$, where each $g_j(\vx)$ is itself concave. 
\end{proof}

Following an idea of~\citet{marcus1957} (who proved concavity of $e_k(\vx)^{1/k}$), we can now prove \eqref{eq:one}, that is, the concavity of $[e_k(\vx^p)]^{1/p}$, by leveraging the ratio-concavity proved in Theorem~\ref{thm:ekratio}.
\begin{theorem}
  \label{thm:cve}
  The function $\vx \mapsto [e_k(\vx^p)]^{1/pk}$ is concave for $p \in (0,1)$ and $\vx \in \reals_+^n$.
\end{theorem}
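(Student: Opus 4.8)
The plan is to follow Marcus's telescoping strategy, with Theorem~\ref{thm:ekratio} supplying the concavity of each factor. The key algebraic observation is that, because $e_0 \equiv 1$, the target function is a \emph{geometric mean} of the ratio functions $\phi_{j,n}$ from Theorem~\ref{thm:ekratio}. Concretely, writing $\phi_{j,n}(\vx) = [e_j(\vx^p)/e_{j-1}(\vx^p)]^{1/p}$, the product telescopes:
\begin{equation*}
  \prod_{j=1}^k \phi_{j,n}(\vx) = \left[\prod_{j=1}^k \frac{e_j(\vx^p)}{e_{j-1}(\vx^p)}\right]^{1/p} = \left[\frac{e_k(\vx^p)}{e_0(\vx^p)}\right]^{1/p} = [e_k(\vx^p)]^{1/p}.
\end{equation*}
Taking $k$-th roots gives exactly
\begin{equation*}
  [e_k(\vx^p)]^{1/(pk)} = \left(\prod_{j=1}^k \phi_{j,n}(\vx)\right)^{1/k},
\end{equation*}
so the function in question is the geometric mean of $\phi_{1,n},\ldots,\phi_{k,n}$.

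First I would invoke Theorem~\ref{thm:ekratio}, which holds for every index $1\le j\le n$ and hence for each $j\in\{1,\ldots,k\}$, to conclude that every factor $\phi_{j,n}$ is concave and strictly positive on $\reals_+^n$. Next I would apply the standard composition rule: the geometric mean $(t_1,\ldots,t_k)\mapsto(\prod_{j=1}^k t_j)^{1/k}$ is concave on $\reals_+^k$ and nondecreasing in each coordinate, so composing it with the concave vector-valued map $\vx\mapsto(\phi_{1,n}(\vx),\ldots,\phi_{k,n}(\vx))$ again yields a concave function. This establishes concavity of $[e_k(\vx^p)]^{1/(pk)}$.

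I expect no genuine obstacle once Theorem~\ref{thm:ekratio} is available; the proof is short precisely because the hard analytic work (the ratio-concavity) is already done. The two facts to verify are routine: the telescoping identity is immediate from $e_0=1$, and the concavity together with coordinatewise monotonicity of the geometric mean is classical. The only point worth flagging is that we need the ratio-concavity \emph{simultaneously} for all indices $j\le k$, not merely for $j=k$; this is fine because Theorem~\ref{thm:ekratio} is proved for every index. Finally, since $e_k(\vx^p)$ is positively homogeneous of degree $pk$ in $\vx$, the function $[e_k(\vx^p)]^{1/(pk)}$ is positively homogeneous of degree one; a concave, degree-one homogeneous function is superadditive, which delivers inequality~\eqref{eq:one} as an immediate consequence.
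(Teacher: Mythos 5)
Your proof is correct and takes essentially the same route as the paper: both rest on the telescoping identity $[e_k(\vx^p)]^{1/pk} = \bigl(\prod_{j=1}^k \phi_{j,n}(\vx)\bigr)^{1/k}$ together with Theorem~\ref{thm:ekratio} applied to every index $j\le k$. The only difference is in the finishing step—the paper uses superadditivity of each $\phi_{j,n}$ followed by Minkowski's inequality for geometric means to obtain \eqref{eq:one} directly, while you use the concave--monotone composition rule to get concavity first and then recover \eqref{eq:one} from degree-one homogeneity; both are valid, and your version arguably matches the theorem's literal statement (concavity) more directly.
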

\begin{proof}
  With Theorem~\ref{thm:ekratio} in hand the proof is simple. Indeed, we have
  \begin{align*}
    [e_k((\vx + \vy)^p)]^{1/pk} &=
    \left[\frac{e_k((\vx+\vy)^p)}{e_{k-1}((\vx+\vy)^p)}\cdot \frac{e_{k-1}((\vx+\vy)^p)}{e_{k-2}((\vx+\vy)^p)}\cdots \frac{e_1((\vx+\vy)^p)}{e_0((\vx+\vy)^p)} \right]^{1/pk}\\
    &= \left[\phi_{k,n}(\vx+\vy)\phi_{k-1,n}(\vx+\vy)\cdots\phi_{1,n}(\vx+\vy)\right]^{1/k}\\
    &\ge \left[\left(\phi_{k,n}(\vx)+\phi_{k,n}(\vy)\right)\cdots\left(\phi_{1,n}(\vx)+\phi_{1,n}(\vy)\right)\right]^{1/k}\\
    &\ge \prod_{j=1}^k[\phi_{j,n}(\vx)]^{1/k} + \prod_{j=1}^k[\phi_{j,n}(\vy)]^{1/k}\\
    &= [e_k(\vx^p)]^{1/pk} + [e_k(\vy^p)]^{1/pk},
  \end{align*}
  where the first inequality follows from Theorem~\ref{thm:ekratio}, while the second is Minkowski's inequality. 
\end{proof}

In the same vein, we easily obtain a proof to~\eqref{eq:two} which generalizes Theorem~\ref{thm:ekratio} (for $p=1$, this generalization reduces to an inequality of \citet{mcleod59}; also independently of~\citet{bullen1961}):
\begin{theorem}
  \label{thm:ml.gen}
  Let $1\le l \le k \le n$, then
  \begin{equation}
    \label{eq:6}
    \Phi_{k,l,n}(\vx) := \left[\frac{e_k(\vx^p)}{e_{k-l}(\vx^p)}\right]^{1/lp},
  \end{equation}
  is concave for $\vx \in \reals_{+}^n$ for $p \in (0,1)$.
\end{theorem}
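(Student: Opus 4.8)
The plan is to reduce the statement directly to Theorem~\ref{thm:ekratio} by exploiting a telescoping factorization of the ratio $e_k/e_{k-l}$. First I would observe that this ratio factors as a product of $l$ consecutive ratios, each of which is exactly the quantity controlled by Theorem~\ref{thm:ekratio}:
\begin{equation*}
  \frac{e_k(\vx^p)}{e_{k-l}(\vx^p)} = \prod_{j=k-l+1}^{k} \frac{e_j(\vx^p)}{e_{j-1}(\vx^p)} = \prod_{j=k-l+1}^{k} \phi_{j,n}(\vx)^p,
\end{equation*}
using that $\phi_{j,n}(\vx)^p = e_j(\vx^p)/e_{j-1}(\vx^p)$. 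Raising to the power $1/lp$ then collapses everything into a clean geometric-mean representation,
\begin{equation*}
  \Phi_{k,l,n}(\vx) = \Bigl[\prod_{j=k-l+1}^{k}\phi_{j,n}(\vx)\Bigr]^{1/l}.
\end{equation*}

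Since $1 \le l \le k \le n$, every index $j$ in the range $k-l+1 \le j \le k$ satisfies $1 \le j \le n$, so Theorem~\ref{thm:ekratio} applies to each $\phi_{j,n}$ and guarantees that these $l$ functions are concave (and positive) on $\reals_+^n$. It remains only to pass concavity through the geometric mean. For this I would invoke the standard facts that the map $(z_1,\ldots,z_l)\mapsto (z_1\cdots z_l)^{1/l}$ is concave and coordinate-wise nondecreasing on $\reals_+^l$, together with the composition rule that a concave, coordinate-wise nondecreasing outer function applied to a vector of concave inner functions is concave. Taking the outer function to be the geometric mean and the inner functions to be $(\phi_{k-l+1,n},\ldots,\phi_{k,n})$ then yields concavity of $\Phi_{k,l,n}$ at once.

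There is essentially no hard obstacle here once Theorem~\ref{thm:ekratio} is in hand: the whole content is the recognition that the generalized ratio telescopes into a product of the $l=1$ ratios, after which concavity is inherited through the geometric mean. The only points worth stating carefully are that the index range stays within $[1,n]$ (guaranteed by $l \le k \le n$), so that every factor is covered by the previous theorem, and that the geometric mean is genuinely concave and monotone, so that the composition rule may be applied. This parallels the telescoping idea already used in the proof of Theorem~\ref{thm:cve}, the difference being that there one needed superadditivity and so invoked Minkowski's inequality, whereas here plain concavity suffices and follows more directly.
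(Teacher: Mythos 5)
Your proof is correct, and it rests on exactly the same key decomposition as the paper: the telescoping factorization $\Phi_{k,l,n} = \bigl[\prod_{j=k-l+1}^{k}\phi_{j,n}\bigr]^{1/l}$, which reduces the claim to Theorem~\ref{thm:ekratio}. The only divergence is the finishing step. The paper stays in superadditive form: it applies Theorem~\ref{thm:ekratio} as $\phi_{j,n}(\vx+\vy)\ge\phi_{j,n}(\vx)+\phi_{j,n}(\vy)$ and then invokes Minkowski's inequality (superadditivity of the geometric mean) to get $\Phi_{k,l,n}(\vx+\vy)\ge\Phi_{k,l,n}(\vx)+\Phi_{k,l,n}(\vy)$, which is inequality~\eqref{eq:two} itself; concavity is then read off via the (left implicit) equivalence of superadditivity and concavity for positively $1$-homogeneous functions. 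You instead pass concavity through the geometric mean using the concave, coordinate-wise nondecreasing composition rule---the same device the paper uses at the end of the proof of Theorem~\ref{thm:ekratio}. What each buys: your route proves the concavity statement literally as written, with no appeal to homogeneity; the paper's route delivers the introduction's inequality~\eqref{eq:two} directly, which your version recovers only after invoking the $1$-homogeneity of $\Phi_{k,l,n}$. Both finishing steps are one-liners, so the difference is a matter of packaging rather than substance.
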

\begin{proof}
  The argument is straightforward. First, we rewrite $\Phi_{k,l,n}(\vx)$ as
  \begin{align*}
    \Phi_{k,l,n}(\vx) &= \Bigl[\frac{e_k(\vx^p)}{e_{k-l}(\vx^p)}\Bigr]^{1/lp}
    = \Bigl[\nlprod_{j=1}^l\left(\tfrac{e_{k-j+1}(\vx^p)}{e_{k-j}(\vx^p)}\right)^{1/p} \Bigr]^{1/l} = \Bigl[\nlprod_{j=1}^l\phi_{k-j+1,n}(\vx)\Bigr]^{1/l}.
  \end{align*}
  Then, using Theorem~\ref{thm:ekratio} and Minkowski's inequality we have
  \begin{align*}
    \Phi_{k,l,n}(\vx+\vy) &=
    \left[\nlprod_{j=1}^l\phi_{k-j+1,n}(\vx+\vy)\right]^{1/l}
    \ge
    \left[\nlprod_{j=1}^l \left(\phi_{k-j+1,n}(\vx)+\phi_{k-j+1,n}(\vx)\right)\right]^{1/l}\\
    &\ge
    \left[\nlprod_{j=1}^l\phi_{k-j+1,n}(\vx)\right]^{1/l} + \left[\nlprod_{j=1}^l\phi_{k-j+1,n}(\vy) \right]^{1/l}\\
    &= \Phi_{k,l,n}(\vx) + \Phi_{k,l,n}(\vy).\qedhere
  \end{align*}
\end{proof}

\section{Additional convexity / concavity results}
\label{sec:add}
In this section we provide some additional concavity results for $e_k$ before proving the remaining two convexity inequalities \eqref{eq:three} and \eqref{eq:last} for $h_k$.
\subsection{Elementary symmetric polynomials}
\label{sec:ekmore}
We begin with a simple result about joint concavity of the multivariate $p$-parallel sum.
\begin{prop}
  \label{prop:key}
  The multivariate $p$-parallel sum map $\vx \mapsto [x_1^p : x_2^p : \cdots : x_n^p]^{1/p}$ is concave on $\reals_+^n$ for $p>0$.
\end{prop}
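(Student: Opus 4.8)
The plan is to induct on the number of variables $n$, exploiting the fact that although $:_p$ fails to be associative, the underlying parallel sum $:$ is. Writing $F_n(\vx) := [x_1^p : x_2^p : \cdots : x_n^p]^{1/p}$ and using associativity of $:$ together with the definition $a :_p b = [a^p : b^p]^{1/p}$, I would first record the recursion
\begin{equation*}
  F_n(\vx) = \bigl(x_1^p : (x_2^p : \cdots : x_n^p)\bigr)^{1/p} = x_1 :_p F_{n-1}(\vx_{[1]}),
\end{equation*}
where $\vx_{[1]} = (x_2,\ldots,x_n)$. The point of keeping the inner block un-powered as $x_2^p:\cdots:x_n^p = F_{n-1}(\vx_{[1]})^p$ is that associativity of $:$ makes it unambiguous, so the nonassociativity of $:_p$ is never invoked.

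Next, I would run the induction. For the base case $n=1$ we have $F_1(x_1)=(x_1^p)^{1/p}=x_1$, which is concave (indeed linear); alternatively the case $n=2$ is exactly the joint concavity of $:_p$ established in Lemma~\ref{lem:base} (valid since $p>0\ge -1$). For the inductive step, assume $F_{n-1}$ is concave on $\reals_+^{n-1}$. It is also continuous and strictly positive on the interior $\reals_{++}^{n-1}$, while the identity map $x_1\mapsto x_1$ is continuous, concave, and positive on $\reals_{++}$. Applying Lemma~\ref{lem:key2} with $f_1(x_1)=x_1$ and $f_2=F_{n-1}$ then shows that $x_1 :_p F_{n-1}(\vx_{[1]})$ is jointly concave, i.e.\ $F_n$ is concave on $\reals_{++}^n$; concavity on the closed orthant $\reals_+^n$ follows by the usual continuity extension.

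The argument is short once the recursion is in place, so there is no serious analytic obstacle; the one thing to watch is the domain. Lemma~\ref{lem:key2} is stated for maps into $\reals_{++}$, so I must restrict to the open orthant before invoking it (the identity is only nonnegative, not strictly positive, on the boundary) and recover the boundary afterwards. It is worth remarking that the recursion also makes the closed form $F_n(\vx)=(\sum_{i=1}^n x_i^{-p})^{-1/p}$ transparent, which gives an independent sanity check (e.g.\ via a direct Hessian computation as in Lemma~\ref{lem:base}), though this is not needed for the proof.
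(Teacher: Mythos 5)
Your proposal is correct and takes essentially the same route as the paper: the same induction on $n$ using the recursion $F_n(\vx) = x_1 :_p F_{n-1}(\vx_{[1]})$ (the paper peels off the last coordinate instead of the first), with the two-variable concavity and monotonicity of $:_p$ from Lemma~\ref{lem:base} as the engine. The only difference is packaging: you dispatch the inductive step by citing Lemma~\ref{lem:key2}, whereas the paper re-runs that lemma's midpoint argument inline; your version is slightly tidier and also makes the $\reals_{++}^n$ versus $\reals_+^n$ boundary issue explicit, which the paper glosses over.
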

\begin{proof}
  The proof is by induction on $n$. The case $n=1$ is trivial; the first nontrivial base case is $n=2$, which is established by Lemma~\ref{lem:base}. Assume thus that the claim holds for $\reals_+^k$ for $k=1,2,\ldots,n-1$. Consider thus, 
  \begin{equation*}
    S_n(x_1,\ldots,x_n) = [x_1^p : x_2^p : \cdots : x_n^p]^{\frac1p}.
  \end{equation*}
  From the induction hypothesis, we have that
  \begin{align}
    \label{eq:3}
    S_{n-1}\bigl(\half(x_1+y_1),\ldots,\half(x_{n-1}+y_{n-1})\bigr)
    &\ge
    \half S_{n-1}(x_1,\ldots,x_{n-1}) + \half S_{n-1}(y_1,\ldots,y_{n-1}).
  \end{align}
  Thus, using the monotonicity of the $:_p$ operation and~\eqref{eq:3} we get
  \begin{align*}
    S_n\bigl(\half(x_1+y_1),\ldots,\half(x_n+y_n)\bigr)
    &=
    \bigl\lbrace[S_{n-1}\bigl(\half(x_1+y_1),\ldots,\half(x_{n-1}+y_{n-1})\bigr)]^p : (\half x_n+\half y_n)^p\bigr\rbrace^{1/p}\\
    &\ge
    \bigl\lbrace[\half S_{n-1}(x_1,\ldots,x_{n-1}) + \half S_{n-1}(y_1,\ldots,y_{n-1})]^p : (\half x_n + \half y_n)^p\bigr\rbrace^{1/p}.
  \end{align*}
  Now concavity of $:_p$ (Lemma~\ref{lem:base}) immediately yields
  \begin{align*}
    &\left[ \left(\tfrac{S_{n-1}(x_1,\ldots,x_{n-1})+S_{n-1}(y_1,\ldots,y_{n-1})}{2}\right)^p : \pfrac{x_n+y_n}{2}^p\right]^{1/p}\\
    &\ge
    \half [ \{S_{n-1}(x_{1},\ldots,x_{n-1})\}^p : x_n^p]^{1/p} + \half [ \{S_{n-1}(y_{1},\ldots,y_{n-1})\}^p+y_n^p]^{1/p},
  \end{align*}
  establishing concavity of $S_n$. Thus, by induction the said map is concave.
\end{proof}

The following lemma is immediate by observation; we state it for subsequent use.
\begin{lemma}
  \label{lem:one}
  Let $f: \reals^n \to \reals$ be a continuous function. Then, $f$ is \emph{reciprocally concave}, i.e., $1/f$ is concave, iff $f\left(\frac{x+y}{2}\right) \le H(f(x),f(y))=2(f(x):f(y))$.
\end{lemma}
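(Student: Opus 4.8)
The plan is to unwind both conditions at midpoints and observe that they are algebraically equivalent. Since $f$ is continuous, concavity of $1/f$ is equivalent to its midpoint concavity, so it suffices to compare the two conditions at an arbitrary midpoint $\tfrac{x+y}{2}$.

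First I would write out what midpoint concavity of $1/f$ asserts, namely
\[
  \frac{1}{f\bigl(\tfrac{x+y}{2}\bigr)} \ge \frac12\Bigl(\frac{1}{f(x)}+\frac{1}{f(y)}\Bigr) = \frac{f(x)+f(y)}{2\,f(x)f(y)}.
\]
Then, using that $f>0$, I would take reciprocals of both sides; this reverses the inequality and produces exactly
\[
  f\Bigl(\tfrac{x+y}{2}\Bigr) \le \frac{2\,f(x)f(y)}{f(x)+f(y)} = H(f(x),f(y)) = 2\bigl(f(x):f(y)\bigr),
\]
where the final equality is just the definition of the parallel sum in~\eqref{eq:14}. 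Running the same chain of reciprocals backward recovers midpoint concavity of $1/f$ from the stated harmonic-mean bound, and continuity then upgrades midpoint concavity to full concavity; this gives both directions of the ``iff''.

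The only delicate points are bookkeeping rather than substance. The reduction from concavity to midpoint concavity is exactly where continuity of $f$ (equivalently of $1/f$) is used, via the standard fact that a continuous midpoint-concave function is concave. The reciprocal step tacitly requires $f>0$ so that $1/f$ is well defined and the direction of the inequality flips correctly; this positivity is implicit in the hypothesis that $1/f$ be a finite concave function. I expect no genuine obstacle here: the lemma is essentially a restatement of the identity ``the harmonic mean of the $f$-values dominates $f$ at the midpoint $\iff$ the reciprocal $1/f$ is midpoint concave,'' with the parallel-sum notation $2(a:b)$ serving merely as shorthand for the harmonic mean $H(a,b)$.
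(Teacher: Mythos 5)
Your proof is correct and coincides with the paper's treatment: the paper offers no argument at all, declaring the lemma ``immediate by observation,'' and your write-up is precisely that observation spelled out (reciprocals convert midpoint concavity of $1/f$ into the harmonic-mean bound, with continuity upgrading midpoint concavity to concavity). Your remark that positivity of $f$ is tacitly required is a fair point of care that the paper itself glosses over.
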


\begin{theorem}
  \label{thm:ekcve}
  The function $\vx \mapsto e_k(\vx^p)$ is reciprocally concave for $p\in (-1,0)$ and $\vx \in \reals_+^n$.
\end{theorem}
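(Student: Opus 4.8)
My plan is to derive the reciprocal concavity of $\vx\mapsto e_k(\vx^p)$ from the telescoping identity $e_k(\vx^p)=\nlprod_{j=1}^k e_j(\vx^p)/e_{j-1}(\vx^p)$, exactly as Theorem~\ref{thm:cve} derived concavity of $[e_k(\vx^p)]^{1/pk}$. The first step is to notice that Theorem~\ref{thm:ekratio} in fact holds on the larger range $p\in[-1,0)$: its proof only invokes the Anderson identity~\eqref{eq:1}, the joint concavity and monotonicity of $:_p$ from Lemma~\ref{lem:base}, Lemma~\ref{lem:key2}, and the concavity and coordinate-wise monotonicity of the outer map $(\nlsum_j t_j^p)^{1/p}$. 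Lemma~\ref{lem:base} supplies concavity of $:_p$ for every $p\ge-1$, and for $p\in(-1,0)$ the outer map $(\nlsum_j t_j^p)^{1/p}$ is, with $q:=-p>0$, precisely the multivariate $q$-parallel sum $[t_1^q:\cdots:t_n^q]^{1/q}$ of Proposition~\ref{prop:key} (concave and increasing in each argument). Hence the induction of Theorem~\ref{thm:ekratio} carries over verbatim, and $\phi_{j,n}(\vx)=[e_j(\vx^p)/e_{j-1}(\vx^p)]^{1/p}$ is concave on $\reals_+^n$ for all $p\in[-1,0)$.

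Next, form $\Psi(\vx):=\bigl[\nlprod_{j=1}^k\phi_{j,n}(\vx)\bigr]^{1/k}$. Each $\phi_{j,n}$ is positively homogeneous of degree one and, by the previous step, concave, hence superadditive; combining this with Minkowski's inequality in the superadditive form $\bigl[\nlprod_j(a_j+b_j)\bigr]^{1/k}\ge\bigl[\nlprod_j a_j\bigr]^{1/k}+\bigl[\nlprod_j b_j\bigr]^{1/k}$ shows, just as in Theorem~\ref{thm:cve}, that $\Psi$ is superadditive, and being degree-one homogeneous it is therefore concave. The telescoping identity now reads $1/e_k(\vx^p)=\nlprod_{j=1}^k\phi_{j,n}(\vx)^{q}=\Psi(\vx)^{qk}$, so the entire question reduces to understanding the power $qk$ applied to the concave function $\Psi$.

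The main obstacle is exactly this final step. Raising a nonnegative concave function to a power reliably preserves concavity only when that power lies in $(0,1]$, whereas the homogeneity degree $qk=-pk$ of $1/e_k(\vx^p)$ grows with $k$ and is not controlled by the bare requirement $p\in(-1,0)$. I therefore expect the decisive work to lie not in the ratio step but in propagating concavity through $\Psi\mapsto\Psi^{qk}$; the natural device is Lemma~\ref{lem:one}, which lets me recast the goal as the harmonic-mean midpoint inequality $e_k\bigl((\tfrac{\vx+\vy}{2})^p\bigr)\le 2\bigl(e_k(\vx^p):e_k(\vy^p)\bigr)$ and attack it by an induction driven directly by the monotonicity and concavity of $:_p$ rather than by the product representation. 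Closing that harmonic-mean induction over the full interval $p\in(-1,0)$ — rather than only where $\Psi^{qk}$ is a sub-unit power of a concave map — is the crux on which the theorem turns, and is where I would concentrate the effort.
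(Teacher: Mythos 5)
Your proposal does not reach a proof: after the (sound) telescoping construction of $\Psi=[e_k(\vx^p)]^{1/pk}$ and the reduction via Lemma~\ref{lem:one} to the midpoint inequality $e_k\bigl(\pfrac{\vx+\vy}{2}^p\bigr)\le H\bigl(e_k(\vx^p),e_k(\vy^p)\bigr)$, you defer the decisive step to an induction that is announced but never carried out, so what remains is a plan rather than an argument. For comparison, the paper's proof is direct and uses neither the ratio machinery nor any induction: it asserts that each monomial $\vx\mapsto\prod_{i\in I}x_i^p$ is reciprocally concave, applies the harmonic-mean midpoint form termwise, and then sums over $|I|=k$ using $\sum_I H(u_I,v_I)\le H\bigl(\sum_I u_I,\sum_I v_I\bigr)$, which is the superadditivity of the parallel sum obtained from its joint concavity and degree-one homogeneity (Lemma~\ref{lem:base}). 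Your preliminary step extending Theorem~\ref{thm:ekratio} to $p\in[-1,0)$ does read correctly: for $q=-p\in(0,1]$ the operation $:_p$ becomes $(u^q+v^q)^{1/q}$ and the outer map $(\nlsum_j t_j^p)^{1/p}$ is exactly the multivariate $q$-parallel sum of Proposition~\ref{prop:key}, so the induction of Theorem~\ref{thm:ekratio} goes through.

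However, the obstruction you located at the final power $qk$ is not a technical hurdle to be overcome by a cleverer induction---it is fatal to the statement on the full interval, and you should have pushed it one step further to a counterexample. The function $1/e_k(\vx^p)$ is positively homogeneous of degree $-pk$, so along any ray it behaves like $t^{-pk}$, which is strictly convex once $-pk>1$; hence for $k\ge2$ and $p\in(-1,-\nicefrac1k)$ the function $e_k(\vx^p)$ is \emph{not} reciprocally concave (concretely, for $k=n=2$ and $p=-\nicefrac{9}{10}$, $1/e_2(\vx^p)=(x_1x_2)^{9/10}$ is convex along $x_1=x_2=t$). Equivalently, setting $\vy=t\vx$ in your target midpoint inequality reduces it to $1+t^s\le 2^{1-s}(1+t)^s$ with $s=-pk$, which the power-mean inequality reverses for every $s>1$ and $t\ne1$. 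The same overreach sits in the paper's own proof: its opening claim that $(x_1\cdots x_k)^r$ is jointly concave for all $r\in(0,1)$ holds only for $r\le\nicefrac1k$ (it is the $rk$-th power of the concave geometric mean, and grows like $t^{rk}$ along rays). So the theorem is correct only on the restricted range $p\in[-\nicefrac1k,0)$, and there your own argument already closes without further effort: $qk=-pk\le1$, so $\Psi^{qk}$ is a sub-unit power of a positive concave function, hence concave. The ``crux'' on which you proposed to concentrate is not a gap in your method but the regime where the statement itself fails, for your route and the paper's alike.
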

\begin{proof}
  First observe that $(x_1\cdots x_k)^r$ is jointly concave for $r \in (0,1)$, and thus $(x_1\cdots x_k)^p$ is reciprocally concave for $p\in (-1,0)$. This allows us to conclude
  \begin{align*}
    e_k\pfrac{x_i+y_i}{2}^p &= \sum_{|I|=k}\nlprod_{i \in I} \pfrac{x_i+y_i}{2}^p
    \le
    \sum_{|I|=k}H\left(\nlprod_{i \in I} x_i^p, \nlprod_{i \in I} y_i^p \right)\\
    &\le H\left(\nlsum_{|I|=k}\nlprod_{i \in I} x_i^p, \nlsum_{|I|=k}\nlprod_{i \in I} y_i^p\right) = H(e_k(\vx^p),e_k(\vy^p)),
  \end{align*}
  where the second inequality follows from the joint concavity of the $:$ operation.
\end{proof}

An immediate corollary of Theorem~\ref{thm:ekcve} is the following reciprocal concavity result for matrices.
\begin{corr}
  \label{cor:ekmtx}
  Let $\bm{X}$ be a Hermitian positive definite matrix. Then, the function
  \begin{equation*}
    \bm{X} \mapsto \frac{1}{e_k(\lambda(\bm{X}^p))},\quad p \in (-1,0),
  \end{equation*}
  is concave (here $\lambda(\cdot)$ denotes the eigenvalue map, i.e., the vector of eigenvalues of a matrix).
\end{corr}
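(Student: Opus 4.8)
The plan is to reduce the matrix statement to the vector statement of Theorem~\ref{thm:ekcve} via the spectral mapping theorem, and then lift scalar concavity to matrix concavity using the Davis characterization of concave spectral functions. First I would record the elementary identity $\lambda(\bm{X}^p)=\lambda(\bm{X})^p$, valid because for a Hermitian positive definite $\bm{X}=\bm{U}\,\mathrm{diag}(\lambda)\,\bm{U}^{*}$ the power $\bm{X}^p$ is defined through the same eigenvectors with eigenvalues raised to the $p$-th power. Consequently $e_k(\lambda(\bm{X}^p))=e_k(\lambda(\bm{X})^p)$, so the map in question equals $\bm{X}\mapsto g(\lambda(\bm{X}))$, where $g(\vx):=1/e_k(\vx^p)$ is defined on $\reals_{++}^n$.

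Next I would observe that $g$ is symmetric (since $e_k$ is a symmetric polynomial) and, by Theorem~\ref{thm:ekcve}, concave on $\reals_+^n$ for $p\in(-1,0)$. The key tool is then Davis's theorem on spectral functions: for a symmetric function $g$ defined on a permutation-invariant convex set, the induced unitarily invariant function $G(\bm{X})=g(\lambda(\bm{X}))$ inherits the concavity (equivalently, convexity) of $g$. Since the positive definite cone is convex and maps under $\lambda$ into the permutation-invariant convex set $\reals_{++}^n$ on which $g$ is concave, Davis's theorem yields the concavity of $\bm{X}\mapsto g(\lambda(\bm{X}))=1/e_k(\lambda(\bm{X}^p))$, which is exactly the claim.

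The main obstacle is making the invocation of Davis's theorem airtight rather than merely routine. Two points deserve care. First, Davis's theorem is usually stated for functions defined on all of $\reals^n$, whereas here $g$ is finite only on the open orthant $\reals_{++}^n$; I would handle this either by extending $g$ to a symmetric concave function on $\reals^n$ (setting it to $-\infty$ outside the closure of the orthant), or by observing that the argument localizes to the convex domain of positive definite matrices. Second, if one prefers a self-contained route that avoids the citation, the concavity can be proved directly: for Hermitian $\bm{X},\bm{Y}$ the sorted spectra satisfy the majorization $\lambda\bigl(\tfrac12(\bm{X}+\bm{Y})\bigr)\prec\tfrac12\lambda(\bm{X})+\tfrac12\lambda(\bm{Y})$, and since a symmetric concave function is Schur-concave, combining this with the midpoint concavity of $g$ gives $g\bigl(\lambda(\tfrac12(\bm{X}+\bm{Y}))\bigr)\ge g\bigl(\tfrac12(\lambda(\bm{X})+\lambda(\bm{Y}))\bigr)\ge\tfrac12 g(\lambda(\bm{X}))+\tfrac12 g(\lambda(\bm{Y}))$. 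Verifying the majorization step (a consequence of Ky Fan's maximum principle) is the only genuinely nontrivial ingredient in this alternative.
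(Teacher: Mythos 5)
Your proposal is correct and matches the paper's (implicit) argument: the paper states this corollary as immediate from Theorem~\ref{thm:ekcve}, the understood justification being exactly the reduction $e_k(\lambda(\bm{X}^p))=e_k(\lambda(\bm{X})^p)$ followed by the standard lift of symmetric concave functions to concave spectral functions. Your two ways of making the lift rigorous --- Davis's theorem, or the majorization $\lambda\bigl(\tfrac12(\bm{X}+\bm{Y})\bigr)\prec\tfrac12\lambda(\bm{X})+\tfrac12\lambda(\bm{Y})$ combined with Schur-concavity of a symmetric concave function --- are both sound and simply supply the detail the paper leaves unstated.
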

Corollary~\ref{cor:ekmtx} in turn yields Corollary~\ref{cor:muir} (a 1974 result of \citet{muir74}) as well as its recent generalization Corollary~\ref{cor:mariet} by~\citet{mariet2017}. Both of these results easily follow since reciprocal concavity of a positive function implies log-convexity.
\begin{corr}[\protect{\citep{muir74}}]
  \label{cor:muir}
  Let $\bm{A} \in \mathbb{C}^{n\times k}$ ($k\le n$) have full column rank, and $\bm{X}\in \mathbb{P}_n$ (the set of Hermitian positive definite matrices). The function $\bm{X} \mapsto e_k((\bm{A}^*\bm{XA})^{-1})$ is log-convex.
\end{corr}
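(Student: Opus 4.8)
The plan is to descend from Corollary~\ref{cor:ekmtx} to Muir's statement in three moves: turn reciprocal concavity into log-convexity, push the power $p$ to the boundary value $-1$ by a limiting argument, and precompose with the linear congruence $\bm X\mapsto \bm A^*\bm X\bm A$.

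First I would record the elementary principle the text invokes: if $g$ is a positive function with $1/g$ concave, then $g$ is log-convex, since $\log(1/g)$ is concave (as $\log$ is concave and increasing) and hence $\log g=-\log(1/g)$ is convex. Applying this to Corollary~\ref{cor:ekmtx}, for each fixed $p\in(-1,0)$ the map $\bm Y\mapsto e_k(\lambda(\bm Y^p))$ is log-convex on the $k\times k$ Hermitian positive definite matrices.

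Next, observe that $\bm A^*\bm X\bm A$ is $k\times k$ and that $e_k((\bm A^*\bm X\bm A)^{-1})=e_k(\lambda((\bm A^*\bm X\bm A)^{-1}))$ corresponds to the power $p=-1$, which lies exactly on the boundary of the admissible range. I would reach it by continuity: since $\bm Y^p\to\bm Y^{-1}$ as $p\to-1^+$ and $e_k\circ\lambda$ is continuous, the functions $\bm Y\mapsto e_k(\lambda(\bm Y^p))$ converge pointwise, as $p\to-1^+$, to $\bm Y\mapsto e_k(\lambda(\bm Y^{-1}))$; a pointwise limit of log-convex functions is log-convex (take logarithms and use that a pointwise limit of convex functions is convex), so the limit is log-convex on $k\times k$ positive definite matrices. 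It is worth stressing that one cannot simply substitute $p=-1$ in Corollary~\ref{cor:ekmtx}: reciprocal concavity genuinely fails at the boundary---for $k\times k$ matrices $1/e_k(\lambda(\bm Y^{-1}))=\det\bm Y$, which is not concave---and it is precisely the weaker property of log-convexity that is robust enough to survive the passage $p\to-1$.

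Finally, the map $\bm X\mapsto\bm A^*\bm X\bm A$ is linear, and since $\bm A$ has full column rank it carries $\mathbb{P}_n$ into the $k\times k$ positive definite matrices, so $\bm A^*\bm X\bm A$ remains in the domain of the log-convex function just produced. Log-convexity is preserved under precomposition with an affine map (the logarithm becomes a convex function composed with an affine one), whence $\bm X\mapsto e_k((\bm A^*\bm X\bm A)^{-1})$ is log-convex, which is the assertion. I expect the only genuinely delicate step to be the boundary passage $p\to-1$; the reciprocal-concavity-implies-log-convexity and log-convexity-under-substitution steps are routine, and the full-rank hypothesis enters solely to keep $\bm A^*\bm X\bm A$ positive definite.
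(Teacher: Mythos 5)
Your proposal is correct and follows essentially the same route as the paper: Corollary~\ref{cor:ekmtx}, the principle that reciprocal concavity of a positive function implies log-convexity, and precomposition with the linear map $\bm{X}\mapsto\bm{A}^*\bm{X}\bm{A}$. Your explicit limiting argument $p\to-1^+$ supplies a step that the paper's one-line justification leaves implicit---Corollary~\ref{cor:ekmtx} only covers $p\in(-1,0)$, and, as you rightly observe, reciprocal concavity genuinely fails at $p=-1$ since $1/e_k(\lambda(\bm{Y}^{-1}))=\det\bm{Y}$ is not concave---so your write-up is, if anything, more careful than the paper's.
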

\begin{corr}[\protect{\citep{mariet2017}}]
  \label{cor:mariet}
  Let $\bm{A} \in \mathbb{C}^{n\times k}$ ($k\le n$) have full column rank. Let $\bm{X}\in \mathbb{P}_n$ and $p\in (-1,0)$. Then, the function $\bm{X} \mapsto e_k((\bm{A}^*\bm{XA})^p)$ is log-convex.
\end{corr}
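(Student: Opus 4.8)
The plan is to derive this corollary directly from Corollary~\ref{cor:ekmtx}, together with the elementary principle (already flagged in the text) that reciprocal concavity of a positive function forces log-convexity. So the first thing I would do is make that principle precise. Let $f$ be positive with $1/f$ concave. By Lemma~\ref{lem:one}, reciprocal concavity is exactly the midpoint estimate
\[
f\bigl(\tfrac{\bm{X}+\bm{Y}}{2}\bigr) \;\le\; H\bigl(f(\bm{X}),f(\bm{Y})\bigr),
\]
where $H$ denotes the harmonic mean. Since the harmonic mean never exceeds the geometric mean, this gives $f\bigl(\tfrac{\bm{X}+\bm{Y}}{2}\bigr) \le \sqrt{f(\bm{X})\,f(\bm{Y})}$, i.e. midpoint convexity of $\log f$; with continuity this upgrades to full convexity of $\log f$, which is log-convexity of $f$.

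Second, I would apply Corollary~\ref{cor:ekmtx} not to $\bm{X}$ itself but to the $k\times k$ Hermitian positive definite matrix $\bm{Y} := \bm{A}^*\bm{X}\bm{A}$. That corollary states that for $p\in(-1,0)$ the map $\bm{Y} \mapsto 1/e_k(\lambda(\bm{Y}^p))$ is concave on $\mathbb{P}_k$. (One may note that for a $k\times k$ matrix $e_k(\lambda(\cdot))$ is just the determinant, but this simplification is not needed for the argument.)

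Third, I would precompose with the map $\bm{X}\mapsto \bm{A}^*\bm{X}\bm{A}$. Because $\bm{A}$ has full column rank, this linear map sends $\mathbb{P}_n$ into $\mathbb{P}_k$, so the composition is well defined and the quantity $e_k((\bm{A}^*\bm{X}\bm{A})^p)$ is finite and strictly positive. Concavity is preserved under composition with a linear (indeed affine) map, so $\bm{X}\mapsto 1/e_k(\lambda((\bm{A}^*\bm{X}\bm{A})^p))$ is concave on $\mathbb{P}_n$; equivalently, $\bm{X}\mapsto e_k((\bm{A}^*\bm{X}\bm{A})^p)$ is a positive reciprocally concave function. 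Invoking the principle from the first step then yields log-convexity, completing the proof. Corollary~\ref{cor:muir} would follow in the same way, taking $p\to -1$ by continuity.

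I do not anticipate a genuine obstacle, since the substantive content is already packaged in Theorem~\ref{thm:ekcve} and Corollary~\ref{cor:ekmtx}. The only points demanding care are (i) the harmonic--geometric mean step that converts reciprocal concavity into log-convexity, and (ii) verifying that the precomposition by $\bm{X}\mapsto\bm{A}^*\bm{X}\bm{A}$ both lands in $\mathbb{P}_k$ and preserves concavity---full column rank of $\bm{A}$ is precisely what guarantees positive definiteness on the image, so that the reciprocal and the logarithm are taken of strictly positive, finite quantities.
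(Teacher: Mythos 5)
Your proposal is correct and takes essentially the same route as the paper, which derives Corollary~\ref{cor:mariet} from Corollary~\ref{cor:ekmtx} by precomposing with the linear map $\bm{X}\mapsto\bm{A}^*\bm{X}\bm{A}$ (using full column rank to land in $\mathbb{P}_k$) and invoking the principle that reciprocal concavity of a positive function implies log-convexity. Your harmonic--geometric mean argument via Lemma~\ref{lem:one} merely fills in the one-line justification the paper leaves implicit.
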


%\iffalse %{
\subsection{Complete homogeneous symmetric polynomials}
\label{sec:hk}
In this section, we prove \eqref{eq:three} and \eqref{eq:last}, the power generalization to existing subadditivity inequalities for complete homogeneous symmetric polynomials. We follow a proof technique very different from \citet{mcleod59} and \citet{baston1978}, which we believe offers a much simpler proof of these inequalities.

Key to our proof is the following convenient representation for complete homogeneous symmetric polynomials (we learned of this representation from~\citep{barvinok2005}):
\begin{lemma}
  \label{lem:hkint}
  Let $\xi_1,\ldots,\xi_n$ be independent standard exponential random variables. Then, for any integer $k\ge 0$
  \begin{equation}
    \label{eq:9}
    \frac{1}{k!}\E[(\xi_1x_1+\cdots+\xi_nx_n)^k] = \sum_{1\le i_1\le i_2 \le \cdots \le i_k \le n}\nlprod_{j=1}^k x_{i_j} = h_k(\vx).
  \end{equation}
\end{lemma}
\begin{proof}
  Since $\E[\xi_i^{j}] = j!$, by expanding $(\xi_1x_1+\cdots+\xi_nx_n)^k$ and taking expectations, the result follows.
\end{proof}
Representation~\eqref{eq:9} permits an immediate proof of the following convexity result of~\citet{mcleod59} (\emph{cf.} the concavity result for $e_k(\vx)^{1/k}$, which is much harder).
\begin{equation}
  \label{eq:10}
  [h_k(\vx+\vy)]^{1/k} \le [h_k(\vx)]^{1/k} + [h_k(\vy)]^{1/k}.
\end{equation}
We prove below a ``fractional power'' generalization of~\eqref{eq:10} below; we will need the following lemma.
\begin{lemma}[Mixed-Minkowski]
  \label{lem:mink}
  Let $p\ge 1$ and $k\ge 1$. Let $\vx,\vy\in \reals_+^n$. Then,
  \begin{equation}
    \label{eq:7}
    \Bigl[\nlsum_i\left(\nlsum_j x_{ij}^p+y_{ij}^p\right)^k\Bigr]^{1/pk}
    \le
    \Bigl(\nlsum_{ij}x_{ij}^{pk}\Bigr)^{1/pk} + \Bigl(\nlsum_{ij} y_{ij}^{pk}\Bigr)^{1/pk}.
  \end{equation}
\end{lemma}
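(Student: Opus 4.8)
The plan is to linearize the inner combination with a substitution and then strip off the fractional exponent, reducing the whole claim to the ordinary Minkowski inequality. I would first set $a_{ij} := x_{ij}^p$ and $b_{ij} := y_{ij}^p$. The decisive feature of the statement as worded is that the inner bracket is $\sum_j (x_{ij}^p + y_{ij}^p) = \sum_j (a_{ij}+b_{ij})$, which is genuinely \emph{additive} in these new variables --- unlike a term of the form $\sum_j (x_{ij}+y_{ij})^p$, it linearizes exactly, with no loss. Writing $P_i := \sum_j a_{ij}$ and $Q_i := \sum_j b_{ij}$, the left-hand side becomes $\bigl(\sum_i (P_i+Q_i)^k\bigr)^{1/(pk)} = \|P+Q\|_k^{1/p}$, where $\|\cdot\|_k$ is the $\ell^k$ norm in the index $i$, while the two summands on the right are $\|P\|_k^{1/p}$ and $\|Q\|_k^{1/p}$.

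With the problem in this shape the argument is a clean two-step reduction. First, since $k\ge1$, Minkowski's inequality for $\|\cdot\|_k$ gives $\|P+Q\|_k \le \|P\|_k + \|Q\|_k$. Second, since $p\ge1$, the map $t\mapsto t^{1/p}$ is concave on $\reals_+$ and vanishes at $0$, hence is subadditive; applying it to the previous line yields $\|P+Q\|_k^{1/p} \le (\|P\|_k+\|Q\|_k)^{1/p} \le \|P\|_k^{1/p} + \|Q\|_k^{1/p}$. Chaining the two inequalities delivers the bound, with the natural right-hand side $\bigl[\sum_i (\sum_j x_{ij}^p)^k\bigr]^{1/(pk)} + \bigl[\sum_i (\sum_j y_{ij}^p)^k\bigr]^{1/(pk)}$. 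In words, I am simply checking that the iterated map $A \mapsto \bigl(\sum_i (\sum_j A_{ij})^k\bigr)^{1/(pk)}$ is subadditive, i.e.\ that it behaves like a mixed ($\ell^{pk}$-of-$\ell^1$) norm raised to the appropriate power.

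I expect the main obstacle to be keeping the two nested sums and the composite exponent $1/(pk)$ straight: one cannot collapse the argument into a single application of Minkowski, and the order of operations is essential. In particular, the concavity of $t\mapsto t^{1/p}$ must be invoked only \emph{after} the $\ell^k$-Minkowski step; and one must resist replacing the inner $\ell^1$ aggregation over $j$ by the flat quantity $\bigl(\sum_{ij} a_{ij}^k\bigr)^{1/(pk)}$, since for $k>1$ the inner $\ell^1$ sum strictly dominates its $\ell^k$ counterpart and such a replacement turns the inequality the wrong way. I would close by recording the edge cases $k=1$ (where the claim degenerates to the subadditivity of $t\mapsto t^{1/p}$) and a single inner term, which pin down where equality is attained and serve as sanity checks on the exponents before inserting the lemma into the proof of~\eqref{eq:three}.
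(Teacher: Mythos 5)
Your proof is correct and is in substance the same as the paper's: the paper first uses the superadditivity $a^p+b^p\le(a+b)^p$ (valid for $p\ge1$) inside each $i$-term and then applies Minkowski's inequality in $\ell^{pk}$ to the vectors $a_i=(\sum_j x_{ij}^p)^{1/p}$ and $b_i=(\sum_j y_{ij}^p)^{1/p}$, whereas you apply Minkowski in $\ell^k$ to $P_i=a_i^p$, $Q_i=b_i^p$ first and invoke subadditivity of $t\mapsto t^{1/p}$ afterwards; since $\|P\|_k^{1/p}=\|a\|_{pk}$, these are the same two ingredients composed in the opposite order.

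One substantive point in your favor: your closing warning about the ``flat'' right-hand side is correct, and it exposes a typo in the lemma as printed. Both your chain and the paper's deliver the mixed-norm bound $\bigl[\nlsum_i\bigl(\nlsum_j x_{ij}^p\bigr)^k\bigr]^{1/pk}+\bigl[\nlsum_i\bigl(\nlsum_j y_{ij}^p\bigr)^k\bigr]^{1/pk}$, not the stated $\bigl(\nlsum_{ij}x_{ij}^{pk}\bigr)^{1/pk}+\bigl(\nlsum_{ij}y_{ij}^{pk}\bigr)^{1/pk}$; for $k>1$ the latter is strictly smaller and \eqref{eq:7} as printed fails, e.g.\ with a single index $i$, $p=1$, $k=2$, $x_{11}=x_{12}=1$ and $\vy=\bm{0}$, where the left side is $2$ but the stated right side is $\sqrt2$. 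The mixed-norm form is also the one actually invoked in the proof of Theorem~\ref{thm:hknew}, so your version is the statement the paper needs.
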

\begin{proof}
  Consider $a,b,c,d\ge 0$. Since $p \ge 1$, we have $a^p+b^p \le (a+b)^p$, whereby $(a^p+b^p)^k + (c^p+d^p)^k \le (a+b)^{pk}+(c+d)^{pk}$. Using this observation with Minkowski's (norm) inequality finishes the proof.
\end{proof}

\begin{theorem}
  \label{thm:hknew}
  Let $\vx, \vy \in \reals_+^n$, $k\ge 1$, and $p \ge 1$. Then,
  \begin{align*}
    [h_k((\vx+\vy)^p)]^{1/{kp}} \le [h_k(\vx^p)]^{1/{kp}} + [h_k(\vy^p)]^{1/{kp}}.
  \end{align*}
\end{theorem}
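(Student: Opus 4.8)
The plan is to recognize, via the integral representation of Lemma~\ref{lem:hkint}, that $(k!\,h_k(\vx^p))^{1/(pk)}$ is a constant multiple of a genuine \emph{norm} of $\vx$---a mixed $\ell^p$--$L^{pk}$ norm---so that the asserted subadditivity is nothing but its triangle inequality. Concretely, writing $\xi=(\xi_1,\dots,\xi_n)$ for the vector of independent standard exponentials, Lemma~\ref{lem:hkint} applied to the vector $\vx^p=(x_1^p,\dots,x_n^p)$ gives
\[
  k!\,h_k(\vx^p)=\E\Bigl[\bigl(\nlsum_\ell \xi_\ell x_\ell^p\bigr)^{k}\Bigr],
  \qquad\text{so}\qquad
  (k!\,h_k(\vx^p))^{1/(pk)}=\bigl\|\,N_\xi(\vx)\,\bigr\|_{L^{pk}},
\]
where $N_\xi(\vx):=\bigl(\nlsum_\ell \xi_\ell x_\ell^p\bigr)^{1/p}$ and the outer norm is taken in $L^{pk}$ over the randomness $\xi$. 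Both $p\ge1$ and $pk\ge1$, which is exactly what makes the two layers legitimate norms.

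First I would establish the inner, pointwise step: for each fixed realization $\xi\in\reals_+^n$ the quantity $N_\xi(\vx)$ is a weighted $\ell^p$ norm of $\vx$, so Minkowski's inequality (valid since $p\ge1$) gives the pointwise bound
\[
  N_\xi(\vx+\vy)=\Bigl(\nlsum_\ell \xi_\ell (x_\ell+y_\ell)^p\Bigr)^{1/p}\le N_\xi(\vx)+N_\xi(\vy).
\]
Next I would take $L^{pk}$ norms of both sides; monotonicity of the $L^{pk}$ norm together with its triangle inequality (valid since $pk\ge1$) yields
\[
  \bigl\|N_\xi(\vx+\vy)\bigr\|_{L^{pk}}
  \le \bigl\|N_\xi(\vx)+N_\xi(\vy)\bigr\|_{L^{pk}}
  \le \bigl\|N_\xi(\vx)\bigr\|_{L^{pk}}+\bigl\|N_\xi(\vy)\bigr\|_{L^{pk}}.
\]
Reading each of the three terms back through the displayed identity turns this into $(k!\,h_k((\vx+\vy)^p))^{1/(pk)}\le (k!\,h_k(\vx^p))^{1/(pk)}+(k!\,h_k(\vy^p))^{1/(pk)}$, and cancelling the common factor $(k!)^{1/(pk)}$ finishes the proof. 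These two nested Minkowski steps are the continuous (expectation) analogue of the mixed-Minkowski inequality of Lemma~\ref{lem:mink}; indeed the conceptually cleanest statement is just that $\vx\mapsto(k!\,h_k(\vx^p))^{1/(pk)}$ is a norm, being the composition of the weighted $\ell^p$ norm $N_\xi$ with the $L^{pk}$ norm.

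The step that needs care---and the only place the hypothesis $p\ge1$ is essential---is the inner one, the treatment of the cross term $(x_\ell+y_\ell)^p$. It is tempting to bound it from below by $x_\ell^p+y_\ell^p$ and thereby reduce to $h_k(\vx^p+\vy^p)$, but for $p\ge1$ that inequality runs the wrong way: $x_\ell^p+y_\ell^p\le(x_\ell+y_\ell)^p$ and $h_k$ is monotone, so a bound on $h_k(\vx^p+\vy^p)$ would control only the \emph{smaller} quantity, not $h_k((\vx+\vy)^p)$. The weighted-$\ell^p$ triangle inequality is precisely the tool that absorbs $(x_\ell+y_\ell)^p$ correctly, and it is what lets the whole expression be seen as a norm; everything else is bookkeeping with the representation of Lemma~\ref{lem:hkint}.
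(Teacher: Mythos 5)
Your proof is correct and follows essentially the same route as the paper: the exponential-variable representation of Lemma~\ref{lem:hkint} combined with a two-layer Minkowski argument (inner weighted $\ell^p$ triangle inequality pointwise in $\xi$, outer triangle inequality in $L^{pk}$ over the randomness), which is exactly the content of the paper's Lemma~\ref{lem:mink} applied after the representation. If anything, your nested-norm formulation is the cleaner rendering of the intended argument: the paper's displayed computation contains typos---its right-hand sides $\E[(\bm{\gamma}^T\vx)^{pk}]$ and $k!^{1/pk}h_k(\vx^{pk})^{1/pk}$ should read $\E[(\sum_j \gamma_j x_j^p)^k]$ and $k!^{1/pk}h_k(\vx^p)^{1/pk}$---and your version, which keeps the inner $\ell^p$ structure intact through both Minkowski steps, repairs these discrepancies.
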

\begin{proof}
  Given the representation~\eqref{eq:9}, the proof is straightforward. Indeed,
\begin{align*}
  %k!h_k(\vx) &= \E[(\gamma_1x_1+\cdots+\gamma_nx_n)^k]\\
  k!h_k((\vx+\vy)^p)
  &= \E[(\gamma_1(x_1+y_1)^p+\cdots+\gamma_n(x_n+y_n)p)^k]\\
  k!^{1/pk}[h_k((\vx+\vy)^p)]^{1/{pk}}
  &=
  \left(\E[(\gamma_1(x_1+y_1)^p+\cdots+\gamma_n(x_n+y_n)^p)^k]\right)^{1/{pk}}\\
  &\le \left(\E[(\bm{\gamma}^T\vx)^{pk}]\right)^{1/{pk}} + \left(\E[(\bm{\gamma}^T\vy)^{pk}]\right)^{1/{pk}}\\
  &= k!^{1/pk}h_k(\vx^{pk})^{1/{pk}} + k!^{1/pk}h_k(\vy^{pk})^{1/{pk}},
\end{align*}
where the inequality follows from Lemma~\ref{lem:mink}.
\end{proof}

A slightly more careful argument with representation~\eqref{eq:9} also yields a ``power'' generalization to the subadditivity result of~\citet{baston1978}  for the ratio $h_k(\vx)/h_1(\vx)$.
\begin{theorem}
  Let $\vx, \vx \in \reals_+^n$, $p\ge 1$, and $k\ge 1$ be an integer. Then,
  \begin{equation}
    \label{eq:11}
    \left[\frac{h_k((\vx+\vy)^p)}{h_{1}((\vx+\vy)^p)}\right]^{\nicefrac1{p(k-1)}}
    \le
    \left[\frac{h_k(\vx^p)}{h_{1}(\vx^p)}\right]^{\nicefrac1{p(k-1)}}
    +
    \left[\frac{h_k(\vx^p)}{h_{1}(\vy^p)}\right]^{\nicefrac1{p(k-1)}}.
  \end{equation}
\end{theorem}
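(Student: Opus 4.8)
The plan is to exploit homogeneity to turn the subadditivity into a convexity statement and then peel off the fractional power. Both sides are positively homogeneous of degree one in $\vx$: since $h_k(\vx^p)$ is homogeneous of degree $pk$ and $h_1(\vx^p)$ of degree $p$, the bracketed ratio has degree $p(k-1)$, and the outer exponent restores degree one (here $k\ge2$; $k=1$ is vacuous). For a positively homogeneous degree-one function subadditivity is equivalent to convexity, so it suffices to prove that $\vx\mapsto[h_k(\vx^p)/h_1(\vx^p)]^{1/(p(k-1))}$ is convex. Writing $\bm u=\vx^p$ and $B(\bm u):=[h_k(\bm u)/h_1(\bm u)]^{1/(k-1)}$, the target map is $\vx\mapsto B(\vx^p)^{1/p}$, and I would split the work into (a) showing that $B$ is sublinear (convex and positively homogeneous of degree one) and coordinatewise nondecreasing on $\reals_+^n$, and (b) deducing subadditivity of $\vx\mapsto B(\vx^p)^{1/p}$ from (a) for every $p\ge1$.

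For step (b): a sublinear function that is nondecreasing on the orthant is the support function of a compact convex set $C\subseteq\reals_+^n$, i.e. $B(\bm u)=\max_{\bm c\in C}\ip{\bm c}{\bm u}$ with $\bm c\ge0$. Since $t\mapsto t^{1/p}$ is increasing,
\[
  B(\vx^p)^{1/p}=\Bigl(\max_{\bm c\in C}\nlsum_i c_i x_i^p\Bigr)^{1/p}=\max_{\bm c\in C}\Bigl(\nlsum_i c_i x_i^p\Bigr)^{1/p}.
\]
For each fixed $\bm c\ge0$ and $p\ge1$ the map $\vx\mapsto(\sum_i c_i x_i^p)^{1/p}$ is a weighted $\ell^p$-norm, hence subadditive by Minkowski's inequality; and a pointwise maximum of subadditive functions is subadditive. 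This yields subadditivity of $\vx\mapsto B(\vx^p)^{1/p}=[h_k(\vx^p)/h_1(\vx^p)]^{1/(p(k-1))}$, which is the asserted inequality — conditional on (a). This reduction is what makes the fractional power $p\ge1$ essentially free once the $p=1$ statement is in hand.

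For step (a): positive homogeneity of $B$ is immediate, and monotonicity reduces to $\partial_{u_i}(h_k/h_1)\ge0$, i.e. to $h_{k-1}(\bm u,u_i)\,h_1(\bm u)\ge h_k(\bm u)$, where $h_{k-1}(\bm u,u_i)=\partial_{u_i}h_k(\bm u)$ is $h_{k-1}$ of the $(n{+}1)$-vector with $u_i$ repeated; this follows from the elementary identity $h_1h_{k-1}=\sum_{|M|=k}d(M)\,\nlprod_{i\in M}u_i$, where $d(M)\ge1$ counts the distinct values in the size-$k$ multiset $M$, whence $h_1h_{k-1}\ge h_k$. The remaining and decisive ingredient is convexity of $B$, i.e. the $p=1$ case; this is Baston's inequality, which may either be cited or reproved through representation~\eqref{eq:9} by writing $h_k(\bm u)/h_1(\bm u)=\tfrac1{k!}\E_{\tilde\pd}[Z^{k-1}]$ with $Z=\nlsum_i\gamma_i u_i$ and $\tilde\pd\propto Z\,d\pd$ the size-biased law (using that size-biasing a standard exponential adds an independent copy), and then invoking the Mixed-Minkowski Lemma~\ref{lem:mink} with the moment exponent lowered from $k$ to $k-1$, exactly as in Theorem~\ref{thm:hknew}. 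I expect the main obstacle to live precisely at this denominator: under size-biasing the tilt $\tilde\pd$ — equivalently the coordinate whose weight is ``doubled,'' chosen with probability $u_i/h_1(\bm u)$ — depends on $\bm u$ and hence differs for $\vx$, $\vy$, and $\vx+\vy$, whereas the Minkowski step needs a single common outer measure. Reconciling these $\bm u$-dependent weights (by conditioning on the doubled coordinate and the fresh exponential and then dominating the weight mismatch) is the careful step absent from the denominator-free Theorem~\ref{thm:hknew}, and it is where I would concentrate the effort.
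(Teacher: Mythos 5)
Your proposal is correct, and it takes a genuinely different route from the paper's. The paper stays inside the probabilistic representation \eqref{eq:9}: it rewrites the claim as \eqref{eq:5}, asserts that the four-variable scalar inequality \eqref{eq:13} suffices, and proves \eqref{eq:13} from the two-term Dresher inequality \eqref{eq:8} together with the bound $(a+b)^p+(c+d)^p\ge a^p+b^p+c^p+d^p$. You instead isolate the $p=1$ case (Baston's inequality --- citable, or derivable from the integral form of Dresher's inequality applied to $f=\bm{\gamma}^T\vx$ and $g=\bm{\gamma}^T\vy$, where no cross terms arise since $\bm{\gamma}^T(\vx+\vy)=f+g$) and then bootstrap to all $p\ge1$ by convex analysis: $B(\bm u)=[h_k(\bm u)/h_1(\bm u)]^{1/(k-1)}$ is sublinear and nondecreasing on $\reals_+^n$, hence the support function of a compact convex set $C\subseteq\reals_+^n$, so that $[B(\vx^p)]^{1/p}=\max_{\bm c\in C}\bigl(\nlsum_i c_ix_i^p\bigr)^{1/p}$ is a pointwise maximum of weighted $\ell^p$-norms and therefore subadditive by Minkowski. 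Your auxiliary steps are sound: $h_1h_{k-1}\ge h_k$ gives monotonicity, subadditivity plus degree-one homogeneity is equivalent to convexity, and nonnegativity of $C$ does follow from monotonicity (e.g., via the monotone extension $\bm u\mapsto B(\bm u^+)$). Your route buys two things. First, generality: the same lemma (for any nondecreasing sublinear $G$ on $\reals_+^n$, the map $\vx\mapsto[G(\vx^p)]^{1/p}$ is subadditive for $p\ge1$) also reproves Theorem~\ref{thm:hknew} by taking $G=h_k^{1/k}$. Second, rigor exactly where you predicted trouble: the paper's reduction of \eqref{eq:5} to \eqref{eq:13} is in fact unjustified, because under the natural matching ($a^p=\bm{\gamma}^T\vx^p$, $b^p=\bm{\gamma}^T\vy^p$ per sample point) the numerator of \eqref{eq:13} satisfies $(a^p+b^p)^k\le(\bm{\gamma}^T(\vx+\vy)^p)^k$ while its denominator satisfies $(a+b)^p\ge\bm{\gamma}^T(\vx+\vy)^p$ by Minkowski, so the left-hand side of \eqref{eq:13} \emph{under}estimates \eqref{eq:5}; and since $W\mapsto\E[W^k]/\E[W]$ is not monotone under pointwise domination, this loss is not repaired by the paper's final denominator bound. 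Your support-function maximum absorbs the cross terms of $h_k((\vx+\vy)^p)$ automatically (the optimal $\bm c$ is chosen after $\vx+\vy$ is fixed). The one unfinished piece of your write-up, the size-biased rederivation of Baston, is optional and explicitly flagged as such; citing \citet{baston1978} (or running Dresher at $p=1$) closes it, so your proposal stands as a complete proof --- and arguably a more watertight one than the paper's.
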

\begin{proof}
  Once again, we rely on the representation~\eqref{eq:9}; this yields
  \begin{equation}
    \label{eq:5}
    \left[\frac{h_k( (\vx+\vy)^p)}{h_{1}((\vx+\vy)^p)}\right]^{\nicefrac1{p(k-1)}}
    =
    \left[\frac{\E[ (\bm{\gamma}^T(\vx+\vy)^p)^k]}{\E[\bm{\gamma}^T(\vx+\vy)^p]}\right]^{\nicefrac1{p(k-1)}}.
  \end{equation}
  To bound~\eqref{eq:5} by the right hand side of~\eqref{eq:11}, it suffices to prove the following inequality:
  \begin{equation}
    \label{eq:13}
    \biggl[\frac{(a^p+b^p)^k+(c^p+d^p)^k}{(a+b)^p+(c+d)^p}\biggr]^{\nicefrac1{p(k-1)}}
    \le
    \biggl[\frac{a^{pk}+c^{pk}}{a^p+c^p}\biggr]^{\nicefrac1{p(k-1)}}
    + 
    \biggl[\frac{b^{pk}+d^{pk}}{b^p+d^p}\biggr]^{\nicefrac1{p(k-1)}}.
  \end{equation}
  To prove~\eqref{eq:13} we use Dresher's generalization of Minkowski's  inequality~\citep{dresher1953,danskin}, which implies that
  \begin{equation}
    \label{eq:8}
    \biggl[\frac{(a^p+b^p)^k+(c^p+d^p)^k}{a^p+b^p+c^p+d^p}\biggr]^{\nicefrac1{k-1}}
    \le
    \biggl[\frac{a^{pk}+c^{pk}}{a^p+c^p}\biggr]^{\nicefrac1{k-1}}
    + 
    \biggl[\frac{b^{pk}+d^{pk}}{b^p+d^p}\biggr]^{\nicefrac1{k-1}}.
  \end{equation}
  Now take $p$-th root on both sides of~\eqref{eq:8} and noting that $p\ge 1$ we obtain
  \begin{align}
    \label{eq:12}
    \biggl[\frac{(a^p+b^p)^k+(c^p+d^p)^k}{a^p+b^p+c^p+d^p}\biggr]^{\nicefrac1{p(k-1)}}
    \le
    \biggl[\frac{a^{pk}+c^{pk}}{a^p+c^p}\biggr]^{\nicefrac1{p(k-1)}}
    + 
    \biggl[\frac{b^{pk}+d^{pk}}{b^p+d^p}\biggr]^{\nicefrac1{p(k-1)}}.
  \end{align}
  Since $p\ge1$, we have $[(a+b)^p+(c+d)^p] \ge a^p+b^p+c^p+d^p$, the left hand side of~\eqref{eq:13} is smaller the left hand side of~\eqref{eq:12}; this concludes the proof.
\end{proof}

\bibliographystyle{abbrvnat}
% \bibliography{suvrit}

\end{document}